\documentclass[11pt]{amsart}
\usepackage{amsmath,amsfonts,latexsym,amssymb,amsthm, verbatim}
\usepackage{url}
 \usepackage[OT2, T1]{fontenc}

\usepackage{resizegather}

\usepackage{amscd}

\usepackage{caption,booktabs}
\usepackage[]{amsrefs}
\usepackage{setspace}

\usepackage{outlines}
\usepackage[flushleft]{threeparttable}

\usepackage{cleveref}

\newtheorem{thm}{Theorem}
\newtheorem{prop}[thm]{Proposition}
\newtheorem{lem}[thm]{Lemma}

\theoremstyle{definition}

\newcommand{\BQ}{\mathbb{Q}}

\newcommand{\BZ}{\mathbb{Z}}

\begin{document}

\title[]{Growth of Torsion of Elliptic Curves with Odd-Order Torsion over Quadratic Cyclotomic Fields}

\author{Burton Newman}
\address{Department of Computer Science\\ University of Southern California\\ Los Angeles, CA 90089\\ USA}
\email{bnewman@usc.edu}

\thanks{The author was partially supported by the NSF grant DMS-1302886.}

\keywords{Elliptic curves, torsion subgroups, modular curves}
\subjclass[2010]{Primary: 11G05, Secondary: 14G35}

\begin{abstract}

Let $K = \mathbb{Q}(\sqrt{-3})$ or $\mathbb{Q}(\sqrt{-1})$ and let $C_n$ denote the cyclic group of order $n$. We study how the torsion part of an elliptic curve over $K$ grows in a quadratic extension of $K$. In the case $E(K)[2] \approx C_1$ we investigate how a given torsion structure can grow in a quadratic extension and the maximum number of extensions in which it grows.  We also study the torsion structures which occur as the quadratic twist of a given torsion structure.  In order to achieve this we  examine $n$-isogenies 
defined over $K$ for $n=15,20,21,24,27,30,35$.

\end{abstract}
\maketitle

\section{Introduction}

\noindent 

Let $K$ be a number field and $E/K$ an elliptic curve. An $n$-cycle of $E/K$ is a cyclic subgroup of $E(\overline{K})$ of order $n$ which is invariant under the action of Gal($\overline{K}/K$).  
An $n$-cycle $C$ of $E/K$ gives rise to a curve $E'/K$ and an isogeny $E \rightarrow E'$ defined over $K$  with kernel cyclic of order $n$,
and every such isogeny 
arises this way \cite{Si}*{Rmk 4.13.2}.  If $E/K$ has an isogeny of this form we say $E/K$ has an $n$-isogeny. 
If the points of an $n$-cycle are rational over an extension $L/K$, we will say the corresponding $n$-isogeny is \textit{pointwise rational} over $L$.
Let $C_n$ denote the cyclic group of order $n$.

In this paper, we  classify $n$-isogenies defined over $K= \mathbb{Q}(\sqrt{-3})$ or $\mathbb{Q}(\sqrt{-1})$  that are pointwise rational over quadratic extensions of $K$ for $n =15,20,21,$ $24,27,30,35$  (with one exception)(\cref{isogenies}).  In the case $K=\mathbb{Q}(\sqrt{-3})$, $E(K)[2]=C_1$ we determine (i) a classification of the torsion structures  which occur as the quadratic twists of a given torsion structure, (ii) a classification of the torsion structures which occur as the growths of a given torsion structure  and (iii) tight bounds on the number of quadratic extensions in which a given torsion structure can grow (\cref{oddtwist}).
In the case $K= \mathbb{Q}(\sqrt{-1})$ we did not complete the classification because we could not disprove the existence of a 21-isogeny over $K$.  This  was accomplished in \cite{Ejder}*{Prop. 2}.
For a history of related classification problems please see \cite{Newman}.

There is an affine curve $Y_0(n)$ whose $K$-rational points classify isomorphism classes of pairs $(E,C)$ where $E/K$ is an elliptic curve and $C$  is an $n$-cycle.  Two pairs $(E,C)$,$(E',C')$ are equivalent if and only if there is an isomorphism $f:E \rightarrow E'$ such that $f(C)= C'$.  By adding a finite number of points (called $\emph{cusps}$) to $Y_0(n)$ we obtain the projective curve $X_0(n)$.  The curve $X_0(n)$ has a model over $\mathbb{Q}$ and hence we have tools to study $X_0(n)(K)$.

Let $E/K$ be an elliptic curve and let $L/K$ be a quadratic extension. We summarize our strategy as follows: When $E(K)[2] = C_1$, we have $E(L)[2] = C_1$ (\cref{GT}).  Hence in order to complete tasks (i), (ii) and (iii), it suffices to complete (i) by \cref{oddpartsadd}. 
 But by \cref{twistback}, if $E(K)_{tor} \not = C_1$ and  $E^d(K)_{tor} \not= C_1$ then one can often show $E$ has an $n$-isogeny pointwise rational over $K(\sqrt{d})$ for some large value of $n$, and these are rare over $K$ (\cref{isogenies}).  The classification of $n$-isogenies leads to the Diophantine problem of determining $X_0(n)(K)$.

In Section 2 we describe some results necessary to understand the rest of the paper. In Section 3 we study the $K$-rational points on $X_0(n)$ for $K = \mathbb{Q}(\sqrt{-1})$, $\mathbb{Q}(\sqrt{-3})$ and $n = 15,20,21, 24,27,30,35$. In Section 4 we use the  classification of $n$-isogenies to study growth of torsion.

Computation played an important role in our work.  We used Magma to compute the rank and torsion of elliptic curves over number fields.
We also used Magma to find automorphism groups of  curves and compute quotient curves under the action of certain groups.  The classification of $n$-isogenies relied upon the Small Modular Curves package in Magma.

\section{Background}
\label{Background}

We require the following classification theorem.

\begin{thm}({Najman \cite{NajCyc}})
\label{Najman}
Let K be a cyclotomic quadratic field and E an elliptic curve over K.  
\begin{itemize}
\item If  $K = \mathbb{Q}(i)$ then $E(K)_{tor}$ is either one of the groups from Mazur's theorem \cite{Mazur}*{Thm 2} or $C_4 \oplus C_{4}$.
\item  If  $K = \mathbb{Q}(\sqrt{-3})$ then $E(K)_{tor}$ is either one of the groups from Mazur's theorem, $C_3 \oplus C_{3}$ or $C_3 \oplus C_{6}$.
\end{itemize}
\end{thm}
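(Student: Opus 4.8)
The plan is to combine the known classification of torsion over arbitrary quadratic fields with two field-specific tools: the Weil pairing and a count of rational points on modular curves. I would begin from the theorem of Kenku, Momose and Kamienny, which gives the complete finite list of groups arising as $E(F)_{tors}$ for $E$ over a quadratic field $F$. Beyond the groups in Mazur's theorem \cite{Mazur}, this list adds exactly the cyclic groups $C_{11}, C_{13}, C_{14}, C_{15}, C_{16}, C_{18}$, the groups $C_2 \oplus C_{10}$ and $C_2 \oplus C_{12}$, and the three ``large'' groups $C_3 \oplus C_3$, $C_3 \oplus C_6$, $C_4 \oplus C_4$. The task thus reduces to deciding, for $K = \mathbb{Q}(i)$ and $K = \mathbb{Q}(\sqrt{-3})$ separately, which of these extra groups actually occur over $K$.

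The three groups of the form $C_m \oplus C_{mn}$ are handled immediately by the Weil pairing: if $C_m \oplus C_m \subseteq E(K)$ then $\mu_m \subseteq K$, i.e. $\zeta_m \in K$. Since $\zeta_3 \in \mathbb{Q}(\sqrt{-3}) \setminus \mathbb{Q}(i)$ and $\zeta_4 = i \in \mathbb{Q}(i) \setminus \mathbb{Q}(\sqrt{-3})$, this rules out $C_3 \oplus C_3$ and $C_3 \oplus C_6$ over $\mathbb{Q}(i)$, and rules out $C_4 \oplus C_4$ over $\mathbb{Q}(\sqrt{-3})$. For the two admissible cases one must still show the groups are realized; here I would simply exhibit an explicit curve over $\mathbb{Q}(\sqrt{-3})$ with torsion $C_3 \oplus C_6$ (which also yields $C_3 \oplus C_3$) and one over $\mathbb{Q}(i)$ with torsion $C_4 \oplus C_4$.

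The substantive work is to eliminate the new cyclic groups together with $C_2 \oplus C_{10}$ and $C_2 \oplus C_{12}$ over both fields. A curve $E/K$ realizing any of these yields a non-cuspidal $K$-rational point on the corresponding modular curve --- $X_1(N)$ for the cyclic groups and $X_1(2,2m)$ for $C_2 \oplus C_{2m}$ --- so it suffices to prove these curves have no non-cuspidal $K$-points. The genus-one curves among them (such as $X_1(11), X_1(14), X_1(15)$) are elliptic curves over $\mathbb{Q}$ via a rational cusp, and I would compute their Mordell--Weil rank over $K$ through the twisting identity $\rk_K A = \rk_\mathbb{Q} A + \rk_\mathbb{Q} A^d$ for $K = \mathbb{Q}(\sqrt{d})$; when this rank is $0$ the group $X_1(N)(K)$ is finite, can be listed explicitly (for instance by injecting torsion into a reduction at a prime of good reduction), and one checks that every point is a cusp. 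For the curves of genus $\ge 2$ (such as $X_1(13), X_1(16), X_1(18)$) I would pass to the Jacobian $J_1(N)$, bound $\rk_K J_1(N)$ by the same identity, and, when the Mordell--Weil rank over $K$ is small enough, apply a Chabauty--Coleman argument over $K$ to determine $X_1(N)(K)$ completely, again verifying the points are cuspidal. The curves $X_1(2,10)$ and $X_1(2,12)$ are treated by whichever of these two methods their genus dictates.

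The main obstacle is precisely this last analysis of the higher-genus modular curves over $K$. Two things can go wrong: provably computing the ranks of $J_1(N)$ and its quadratic twist over $\mathbb{Q}$ (which may require an explicit descent rather than an analytic estimate), and handling cases where a quadratic twist gains rank, so that the rank is not small relative to the genus and Chabauty does not apply directly. In such cases I would look for an intermediate quotient curve of smaller genus through which the $N$-torsion structure factors, or exploit the fact that $\mathbb{Q}(i)$ and $\mathbb{Q}(\sqrt{-3})$ are the CM fields of $j = 1728$ and $j = 0$ to restrict the possible $j$-invariants, thereby reducing the remaining cases to a finite explicit check.
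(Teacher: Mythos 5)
This theorem is not proved in the paper at all: it is stated as background and attributed to Najman \cite{NajCyc}, so there is no internal proof to compare your attempt against. Your proposal is, in outline, precisely the argument of that cited reference: start from the Kenku--Momose--Kamienny classification over quadratic fields, rule out the ``wrong'' full-torsion groups via the Weil pairing ($\zeta_3 \notin \mathbb{Q}(i)$, $\zeta_4 \notin \mathbb{Q}(\sqrt{-3})$), exhibit curves realizing $C_4 \oplus C_4$ and $C_3 \oplus C_6$, and eliminate $C_{11}, C_{13}, C_{14}, C_{15}, C_{16}, C_{18}, C_2 \oplus C_{10}, C_2 \oplus C_{12}$ by showing the corresponding modular curves have only cuspidal points over the two fields, using the quadratic-twist rank identity $\rk_K A = \rk_{\mathbb{Q}} A + \rk_{\mathbb{Q}} A^d$. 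The one place you over-engineer is the contingency planning around Chabauty--Coleman: in the actual proof the relevant Jacobians all turn out to have rank $0$ over both fields, so the $K$-points inject into an explicitly computable finite group via reduction and no Coleman integration (or CM fallback) is ever needed.
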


\begin{prop}\cite{Tornero}*{Cor. 4}
\label{oddpartsadd} If n is an odd positive integer we have 
$$E(K(\sqrt{d}))[n] \approx E(K)[n] \oplus E^d(K)[n]$$
\end{prop}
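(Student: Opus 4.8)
The plan is to exploit the quadratic-twist isomorphism together with the oddness of $n$ to split the $n$-torsion over $L := K(\sqrt d)$ into eigenspaces for the nontrivial automorphism. Write $\Gal(L/K) = \langle \sigma \rangle$, so $\sigma$ is an involution. Since $E^d$ is the twist of $E$ by $d$, there is an isomorphism $\phi \colon E \to E^d$ defined over $L$ whose Galois conjugate satisfies $\phi^\sigma = \phi \circ [-1]$, where $[-1]$ is the inversion map on $E$. Concretely, for $E \colon y^2 = x^3 + ax + b$ one may take $E^d \colon y^2 = x^3 + a d^2 x + b d^3$ and $\phi(x,y) = (dx,\, d\sqrt{d}\,y)$; applying $\sigma$ sends $\sqrt d \mapsto -\sqrt d$, and comparing with $\phi(x,-y)$ gives the relation $\phi^\sigma = \phi \circ [-1]$ directly.

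First I would decompose $M := E(L)[n]$ under the action of $\sigma$. Because $n$ is odd, $2$ is invertible modulo the order of every element of $M$, so the operators $\tfrac{1}{2}(1+\sigma)$ and $\tfrac{1}{2}(1-\sigma)$ are well-defined idempotents and yield $M = M^+ \oplus M^-$, where $M^{\pm} = \{P \in M : \sigma P = \pm P\}$. The sum is direct since any $P \in M^+ \cap M^-$ satisfies $2P = 0$, forcing $P = 0$ as $M$ has odd exponent. By definition of the Galois action, the fixed part is exactly the $K$-rational $n$-torsion, $M^+ = \{P \in E(L)[n] : \sigma P = P\} = E(K)[n]$.

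Next I would identify $M^-$ with $E^d(K)[n]$ via $\phi$. For $Q \in M^-$ one computes $\sigma(\phi(Q)) = \phi^\sigma(\sigma Q) = (\phi \circ [-1])(-Q) = \phi(Q)$, so $\phi(Q)$ is fixed by $\sigma$ and hence lies in $E^d(K)$; since $\phi$ is an isomorphism it preserves order, so $\phi(Q) \in E^d(K)[n]$. The same computation applied to $\phi^{-1}$ shows that every $P \in E^d(K)[n]$ has $\phi^{-1}(P) \in M^-$, so $\phi$ restricts to a group isomorphism $M^- \isom E^d(K)[n]$. Combining the two eigenspace identifications gives $E(L)[n] = M^+ \oplus M^- \approx E(K)[n] \oplus E^d(K)[n]$, as claimed. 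The only place the hypothesis is used—and the sole genuine obstacle—is the eigenspace splitting: invertibility of $2$ on $M$ is precisely what fails when $2 \mid n$, which is why the statement is restricted to odd $n$.
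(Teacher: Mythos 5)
Your proof is correct and complete: the explicit model $y^2=x^3+ad^2x+bd^3$ with $\phi(x,y)=(dx,\,d\sqrt{d}\,y)$ does satisfy $\phi^\sigma=\phi\circ[-1]$, the oddness of $n$ makes the idempotents $\tfrac12(1\pm\sigma)$ well defined so that $E(L)[n]=M^+\oplus M^-$, Galois descent gives $M^+=E(K)[n]$, and your computation identifying $M^-$ with $E^d(K)[n]$ via $\phi$ (and its inverse) is valid. Note that the paper itself gives no proof of this proposition---it is imported from \cite{Tornero}*{Cor. 4}---and your argument is essentially the standard one underlying that cited result, namely the $\pm$-eigenspace splitting under the quadratic involution combined with the twist isomorphism, so there is no internal proof for you to have diverged from.
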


\begin{prop}
\label{twistback}
Let K be a number field and E/K an elliptic curve. 
Let d $\in$ K be a nonsquare and let L = K$(\sqrt{d})$.  If H is a subgroup of $E^d(K)_{tor}$ of odd order, then there is a Gal($\overline{K}/K$)-invariant subgroup J of E(L)$_{tor}$ such that J $\approx H \oplus E(K)_{tor}$.

\end{prop}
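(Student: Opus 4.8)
The plan is to exploit the defining property of the quadratic twist: over $L = K(\sqrt{d})$ there is an isomorphism $\phi \colon E^d \to E$, and its twistedness is encoded in a single cocycle relation with the nontrivial element $\sigma$ of $\Gal(L/K)$. First I would fix such a $\phi$, defined over $L$, and record that $\phi^{\sigma} = \phi \circ [-1]$. This is immediate from explicit Weierstrass models: writing $E \colon y^2 = x^3 + ax + b$ and $E^d \colon y^2 = x^3 + ad^2 x + bd^3$, the map $\phi$ is $(x,y) \mapsto (x/d,\, y/d^{3/2})$ with $d^{3/2} = d\sqrt{d}$, and since $\sigma$ negates $d^{3/2}$ one checks $\phi^{\sigma}(x,y) = (x/d,\, -y/d^{3/2}) = (\phi \circ [-1])(x,y)$.

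The candidate is $J := \phi(H) + E(K)_{tor} \subseteq E(L)_{tor}$, and the two things to verify are that this sum is internally direct and that $J$ is $\Gal(\overline{K}/K)$-invariant. For the Galois action, take $Q \in H \subseteq E^d(K)$. Since $Q$ is $K$-rational we have $\sigma(Q)=Q$, and the cocycle relation gives
$$\sigma(\phi(Q)) = \phi^{\sigma}(\sigma(Q)) = (\phi \circ [-1])(Q) = -\phi(Q).$$
Thus $\sigma$ acts as $[-1]$ on the subgroup $\phi(H)$. Because every element of $\Gal(\overline{K}/K)$ restricts to either $1$ or $\sigma$ on $L$ and $\phi$ is defined over $L$, it follows that $\phi(H)$ is $\Gal(\overline{K}/K)$-invariant; since $E(K)_{tor}$ is fixed pointwise, $J$ is invariant as well. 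Note also that $\phi(H) \subseteq E(L)_{tor}$ because $\phi$ is an $L$-isomorphism carrying torsion to torsion.

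The place where the odd-order hypothesis enters — and the only real content of the argument — is the triviality of the intersection $\phi(H) \cap E(K)$. If $P$ lies in this intersection, then $\sigma(P) = -P$ by the displayed computation, while $\sigma(P) = P$ because $P \in E(K)$; hence $2P = O$. But $P$ has odd order, as $\phi(H) \cong H$ has odd order, so $P = O$. Therefore the sum is direct and $J = \phi(H) \oplus E(K)_{tor} \cong H \oplus E(K)_{tor}$. The main obstacle is essentially bookkeeping: pinning down the correct normalization of the twist so that the sign in $\sigma(\phi(Q)) = -\phi(Q)$ comes out right. Once that relation is secured, the odd-order cancellation is the clean engine that forces the intersection to be trivial and delivers the desired internal direct sum.
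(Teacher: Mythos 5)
Your proof is correct and follows essentially the same route as the paper: both push $H$ through the explicit twist isomorphism $E^d \to E$ defined over $L$, observe that the nontrivial element of $\Gal(L/K)$ acts as $[-1]$ on the image (so the image is Galois-stable, being closed under inverses), and use the odd-order hypothesis to force the intersection with $E(K)_{\mathrm{tors}}$ to be trivial. If anything, your write-up makes explicit the sign computation behind the trivial-intersection step, which the paper states more tersely.
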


\begin{proof}
We may assume E is in  Weierstrass form.  We have an isomorphism:  $$ T: E^d \rightarrow E$$ $$ (x,y) \mapsto (x,\sqrt{d}y)$$  so $T(H) \approx H$ and $T(H)$ is Gal($\overline{K}/K$)-invariant
 since the points of $H$ are rational over $K$ and $H$ is a subgroup (so closed under inverses).  Since $H$ has odd order, it has no points of order $2$ so $T(H) \cap E(K)_{tor} = \{0\}$.  Hence $J := T(H) + E(K)_{tor} \approx H \oplus E(K)_{tor}$ is a subgroup of $E(L)$. As $J$ is the sum of Gal$(\overline{K}/K)$-invariant subgroups, it is invariant as well.
\end{proof}

\begin{prop}
\label{full}

Let K$=\mathbb{Q}(\sqrt{D})$ ($D = -1, -3$), E/K an elliptic curve and L a quadratic extension of K. Then the only odd prime power n such that $C_n \oplus C_n  \subseteq E(L) $ is n=3.

\end{prop}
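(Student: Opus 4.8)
The plan is to combine the Weil pairing with an elementary degree count. First I would observe that the containment $C_n \oplus C_n \subseteq E(L)$ actually forces all of $E[n]$ to be $L$-rational: over the characteristic-zero field $K$ we have $E[n] \cong C_n \oplus C_n$, so a subgroup of $E(L)$ isomorphic to $C_n \oplus C_n$ has order $n^2 = |E[n]|$ and hence equals $E[n]$. Now the Weil pairing $e_n \colon E[n] \times E[n] \to \mu_n$ is bilinear, surjective, and $\Gal(\overline{K}/K)$-equivariant (see \cite{Si}). Choosing generators $P, Q$ of $E[n]$ with $e_n(P,Q) = \zeta_n$ a primitive $n$-th root of unity, and noting that every $\sigma \in \Gal(\overline{K}/L)$ fixes $P$ and $Q$, we get $\sigma(\zeta_n) = e_n(\sigma P, \sigma Q) = e_n(P,Q) = \zeta_n$. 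Thus $\zeta_n$ is fixed by $\Gal(\overline{K}/L)$, i.e. $\Q(\zeta_n) \subseteq L$.

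Next I would exploit the degree. Since $L$ is quadratic over $K$ and $K$ is quadratic over $\Q$, we have $[L:\Q] = 4$, whence $\varphi(n) = [\Q(\zeta_n):\Q] \le 4$. Among odd prime powers $n = p^k$ this inequality is very restrictive: $\varphi(3) = 2$ and $\varphi(5) = 4$, while every other odd prime power satisfies $\varphi(n) \ge 6$ (indeed $\varphi(7) = \varphi(9) = 6$, and the values only grow from there). So the only candidates left are $n = 3$ and $n = 5$.

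The remaining work, and the one place where the specific fields $K$ enter, is to eliminate $n = 5$. Here $\varphi(5) = 4 = [L:\Q]$, so the inclusion $\Q(\zeta_5) \subseteq L$ forces $L = \Q(\zeta_5)$. But then $K \subseteq L = \Q(\zeta_5)$ would make $K$ the unique quadratic subfield of $\Q(\zeta_5)$, which is $\Q(\sqrt{5})$. Since neither $\Q(\sqrt{-1})$ nor $\Q(\sqrt{-3})$ equals $\Q(\sqrt{5})$, this is impossible, so $n = 5$ cannot occur and $n = 3$ is the only surviving value.

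The bulk of the argument is routine once the Weil pairing is invoked, so the main obstacle is really just the clean disposal of the $n = 5$ case; the key point to get right there is that a degree count alone is not enough — one must use the arithmetic fact that the quadratic subfield of $\Q(\zeta_5)$ is real, hence distinct from both cyclotomic quadratic fields in the hypothesis. I would also be careful to invoke \emph{surjectivity} of the Weil pairing, so that full $L$-rationality of $E[n]$ genuinely forces $\mu_n \subseteq L$ rather than only the rationality of some proper subgroup of roots of unity.
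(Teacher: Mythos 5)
Your proof is correct, and its first half follows the same path as the paper's: full $n$-torsion rational over $L$ forces $\mu_n \subseteq L$ (the paper simply cites \cite{Si}*{Cor. 8.1.1} for this, where you re-derive it from Galois equivariance and surjectivity of the Weil pairing), and then $\varphi(n) \leq [L:\Q] = 4$ leaves only $n = 3, 5$ among odd prime powers. Where you genuinely diverge is in eliminating $n = 5$. The paper stays inside $L$ and uses composita of roots of unity: since $K$ already contains $\mu_3$ or $\mu_4$, having $\mu_5 \subseteq L$ would put a primitive $15$th or $20$th root of unity in $L$, and $\varphi(15) = \varphi(20) = 8 > 4$, a contradiction. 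You instead observe that $\varphi(5) = 4 = [L:\Q]$ pins down $L = \Q(\zeta_5)$ exactly, whose unique quadratic subfield is the real field $\Q(\sqrt{5})$, so $L$ cannot contain $K = \Q(\sqrt{-1})$ or $\Q(\sqrt{-3})$. Both finishes are elementary and rest on the same arithmetic input (that $K$ is a cyclotomic quadratic field distinct from $\Q(\sqrt{5})$); the paper's version extends with no change to the even prime powers $n = 2, 4, 8$ it also needs to rule in or out of its list, while yours isolates most transparently why the hypothesis on $K$ matters. One small omission: the paper closes by exhibiting an explicit curve, $[0,-1,1,217,-282]$, with full $3$-torsion over $\Q(\sqrt{-3})$, showing that $n = 3$ actually occurs with $L = \Q(\sqrt{-1},\sqrt{-3})$. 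Your argument proves only the exclusion direction, which is all the literal statement demands, but you should be aware that the witness for $n=3$ is what makes the proposition sharp rather than vacuous.
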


\begin{proof}

Let $\phi$ denote Euler's totient function.  If $n=p^t$ where $p$ is a prime and $C_n \oplus C_n  \subseteq E(L) $ then by \cite{Si}*{Cor 8.1.1}
, $L$ contains an $n$th root of unity $\mu_n$.  Hence $ (p-1)p^{t-1} =  \phi(n) \leq [L:\mathbb{Q}] =4$ so $n=2,3,4,5$ or $8$. Note there is either a 3rd or 4th root of unity in $K$, 
so if $\mu_5$ is in $L$, then there is a 15th or 20th root of unity in $L$. 
But $\phi(15)=8 > 4$ and $\phi(20)=8 >4$, a contradiction.  
On the other hand, there is an elliptic curve E/$\mathbb{Q}$ (namely $[0,-1,1, 217, -282]$) which has full 3-torsion over K$=\mathbb{Q}(\sqrt{-3})$ and hence provides examples in each case with L$=\mathbb{Q}(\sqrt{-1},\sqrt{-3})$.
\end{proof}

The following theorem lists various restrictions on growth in quadratic extensions.

\begin{thm}
\label{GT} 
Let K be a number field, E/K an elliptic curve, L a quadratic extension of K and p an odd prime.

\begin{enumerate}
\item If E(K)[2] $ \approx C_1$ then E(L)[2] $ \approx C_1$ . \label{GT 1}

\item  If d $\in$ K, d $\ne 0$, then $E^d(K)[2] \approx E(K)[2]$.

\item If E(K)[p] $ \approx C_1$ and E(L)[p] $\approx C_p \oplus C_p$ then K contains a primitive pth root of unity.

\item If E(K)[p] $\approx C_p$ and E(L)$[p^{\infty}]\ne$ E(K)$[p^{\infty}]$ then E(L)[p] $\approx C_p \oplus C_p$.

\item If E(K)[p] $\approx C_p$ and  E(L)[p] $\approx C_p \oplus C_p$ then K does not contain a primitive pth root of unity.
\label{p to pp}

\item If E(K)[p] $\approx C_p \oplus C_p$ then E(L)$[p^{\infty}]= $ E(K)$[p^{\infty}]$.

\end{enumerate}\end{thm}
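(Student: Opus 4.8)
The plan is to let $\sigma$ generate $\Gal(L/K)$, so that $\sigma^2 = \id$ and $E(K)[m] = E(L)[m]^{\sigma}$ for every $m$. Two structural facts drive everything. First, the Weil pairing $e_p \colon E[p] \times E[p] \to \mu_p$ is Galois-equivariant, $e_p(\sigma P, \sigma Q) = \sigma\bigl(e_p(P,Q)\bigr)$, and is nondegenerate and alternating, so if $P,Q$ is an $\F_p$-basis of $E[p]$ then $e_p(P,Q)$ is a primitive $p$th root of unity; in particular $\sigma$ acts on $\mu_p$ by raising to the power $\det(\sigma)$, and full $p$-torsion over a field forces $\mu_p$ into that field (as used in \cref{full}). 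Second, since $p$ is odd the polynomial $x^2-1 = (x-1)(x+1)$ has distinct roots mod $p$, so the action of $\sigma$ on $E[p] \cong \F_p^2$ is diagonalizable with eigenvalues in $\{1,-1\}$, and its $+1$-eigenspace is exactly $E(K)[p]$.

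Parts (1) and (2) are elementary. For (1), put $E$ in Weierstrass form; $E(K)[2] \approx C_1$ means the $2$-division cubic has no root in $K$, hence is irreducible over $K$, so any root generates a cubic extension. Since $3 \nmid [L:K] = 2$, no root lies in $L$, whence $E(L)[2] \approx C_1$. For (2), the twist isomorphism $T\colon E^d \to E$, $(x,y)\mapsto(x,\sqrt{d}\,y)$ from the proof of \cref{twistback} fixes the $x$-coordinate and sends $y=0$ to $y=0$, so it restricts to a $K$-rational isomorphism on $2$-torsion and identifies $E^d(K)[2]$ with $E(K)[2]$.

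Parts (3) and (5) come from combining the eigenvalue picture with $\det(\sigma)$. For (3), $E(K)[p] \approx C_1$ says $\sigma$ has no nonzero fixed vector, so $1$ is not an eigenvalue; together with $\sigma^2 = \id$ this forces $\sigma = -\id$ on $E[p]$, hence $\det(\sigma)=1$ and $\sigma$ fixes $e_p(P,Q)$, putting $\mu_p \subseteq K$. For (5), $E(K)[p]\approx C_p$ gives a one-dimensional fixed space and $\sigma \ne \id$, so $\sigma$ is conjugate to $\mathrm{diag}(1,-1)$ with $\det(\sigma) = -1$; choosing eigenvectors $P$ (eigenvalue $1$) and $Q$ (eigenvalue $-1$) gives $\sigma\bigl(e_p(P,Q)\bigr) = e_p(P,-Q) = e_p(P,Q)^{-1}$, so if $\mu_p \subseteq K$ then $e_p(P,Q) = e_p(P,Q)^{-1}$, forcing a primitive $p$th root to be its own inverse, which is impossible for odd $p$.

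Parts (4) and (6) use the same $\pm 1$ decomposition at the level of $p$-primary torsion. For (6), $E(K)[p]\approx C_p\oplus C_p$ means $E[p]\subseteq E(K)$, so $\sigma$ fixes every $p$-torsion point; given $P \in E(L)[p^\infty]$ the point $(1-\sigma)P$ is anti-invariant, and if it were nonzero a suitable $p$-power multiple would be a nonzero $p$-torsion point that is both fixed and negated by $\sigma$, forcing it into $E[2]$ and hence to $0$, a contradiction; thus $\sigma P = P$ and $E(L)[p^\infty] = E(K)[p^\infty]$. For (4), since $E(K)[p]\approx C_p \subseteq E(L)[p]$, the only alternative to the claim is $E(L)[p]\approx C_p$, which forces $E(L)[p^\infty]$ to be cyclic, say of order $p^b$ with generator $P$; as this group is $\sigma$-stable, $\sigma P = mP$ with $m^2 \equiv 1 \pmod{p^b}$, so $m \equiv \pm1$. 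If $m\equiv -1$ the $\sigma$-fixed subgroup of $\langle P\rangle$ is trivial, contradicting $E(K)[p]\approx C_p$; so $m\equiv 1$, giving $P\in E(K)$ and $E(L)[p^\infty]=E(K)[p^\infty]$, contrary to hypothesis. Hence $E(L)[p]\approx C_p\oplus C_p$. I expect the \textbf{main obstacle} to be the bookkeeping in (4): one must first argue that $E(L)[p]\approx C_p$ makes the entire $p$-primary group cyclic, not merely its $p$-torsion, before the $m^2\equiv 1$ argument applies, and the clean dichotomy $m\equiv\pm 1$ relies essentially on $p$ being odd.
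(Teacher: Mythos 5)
Your proposal is correct, but it takes a genuinely different route from the paper's for most parts. The paper derives parts (3), (4) and (6) almost entirely from the quadratic-twist decomposition $E(L)[n] \approx E(K)[n] \oplus E^d(K)[n]$ of \cref{oddpartsadd}, combined with the rank bound of \cite{Si}*{Cor. 6.4} and the Weil-pairing corollary \cite{Si}*{Cor 8.1.1}: for (3) the twist $E^d$ acquires full $p$-torsion over $K$ itself, so the Weil pairing applied over $K$ gives $\mu_p \subseteq K$ at once; for (4) and (6) the twist has nontrivial $p$-torsion and a count of copies of $C_p$ inside $E(L)[p]$ finishes the job. You never invoke the twist; instead you work directly with the involution $\sigma$ generating $\Gal(L/K)$: diagonalizability of $\sigma$ on $E[p]$ (using $p$ odd exactly where the paper uses it), identification of $E(K)[p]$ with the $+1$-eigenspace, the determinant-equals-cyclotomic-character relation via the Weil pairing, and bare-hands arguments with anti-invariant points in (6) and cyclic $p$-primary groups in (4). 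These are two faces of the same structure --- the $(-1)$-eigenspace of $\sigma$ on $E(L)_{\mathrm{tors}}$ is precisely the image of $E^d(K)_{\mathrm{tors}}$ for $L = K(\sqrt{d})$ --- so your version is more self-contained (no appeal to the cited Gonz\'alez-Jim\'enez--Tornero result, and you also supply the proofs of (1) and (2) that the paper declares easy), while the paper's version is shorter given the cited machinery and keeps the twist $E^d$ explicit, which pays off later since \cref{oddtwist} is itself a statement about twists. Only in part (5) do the two arguments essentially coincide: both compute $\det\rho(\sigma)$ and exploit $\sigma^2 = 1$ with $p$ odd, you with an eigenbasis, the paper with an upper-triangular basis adapted to the fixed line. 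The obstacle you flagged in (4) is handled correctly: $E(L)[p] \approx C_p$ does force $E(L)[p^\infty]$ to be cyclic, since the $p$-primary part embeds in $(\Q_p/\Z_p)^2$ and hence has the form $\Z/p^a \oplus \Z/p^{a'}$, whose $p$-torsion has rank equal to the number of nontrivial factors; and your dichotomy $m \equiv \pm 1 \pmod{p^b}$ is valid because $p^b \mid (m-1)(m+1)$ with $\gcd(m-1,m+1)$ dividing $2$.
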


\begin{proof} 

Parts (1) and (2) are easily verified.

3) Suppose $E(K)[p]$ is trivial.
By \cref{oddpartsadd}, it follows that $E^d(K)[p] \approx C_p \oplus C_p$ so by \cite{Si}*{Cor 8.1.1} we conclude $K$ contains a primitive $p$th root of unity.

4) Let $m$ the largest positive integer such that there is an element of order $p^m$ in $E(L)_{tor}$. We have   $E(L)[p^{m}]= E(K)[p^{m}] \oplus E^d(K)[p^{m}]$ by \cref{oddpartsadd}. If 
$E(L)[p^{m}]\ne E(K)[p^{m}]$ then $E^d(K)[p^{m}] \not \approx C_1$ so $E^d(K)[p]$ $\not \approx C_1$.  Hence $E^d(K)[p] \approx C_p$ or $C_p \oplus C_p$ by 
\cite{Si}*{Cor. 6.4}.  In the latter case this would yield $E(L)[p] \approx C_p \oplus C_p \oplus C_p$ which contradicts 
\cite{Si}*{Cor. 6.4}, so $E(L)[p] \approx C_p \oplus C_p $.

5) Let $\mu_p$ be a primtive $p$th root of unity. Suppose $E(K) \approx C_p$, and $E(L)[p] \approx C_p \oplus C_p$.  Let $\sigma  \in $ Gal($L/K$) be nontrivial.  We can choose a basis for $E(L)[p]$ such that the induced Galois representation satisfies
$$ \rho: \mbox{Gal(L/K)} \rightarrow \mbox{Gl}_2(\BZ / p \BZ)$$
$$\sigma \mapsto 
\begin{bmatrix}
1 & \alpha\\
0 & \chi 
\end{bmatrix}$$

for some $\chi \in (\BZ / p \BZ)^{*}$, $\alpha \in \BZ / p \BZ$. If $\mu_p \in K$ then $$\mu_p = \sigma(\mu_p) = (\mu_p)^{det(\rho(\sigma))}=(\mu_p)^{\chi}$$ 
so $\chi = 1$ mod $p$.  As $\sigma^2=1$, $(\rho(\sigma))^2=1$ so $2\alpha = 0$. As $p$ is odd, we  conclude $\alpha=0$, so $\rho(\sigma)$ is the identity.  This means $\sigma$ acts trivially on the p-torsion, so $E(K)[p] \approx C_p \oplus C_p$, contradicting our hypothesis.

6) Suppose $E(K)[p] \approx C_p \oplus C_p$. By \cref{oddpartsadd}, if $E(L)[p^{\infty}] \ne  E(K)[p^{\infty}]$ then $E^d(K)[p^{\infty}] \not \approx C_1$ so $E^d(K)[p]$ $\not \approx C_1$.  Hence $C_p \oplus C_p \oplus C_p \subseteq E(L)[p]$, contradicting \cite{Si}*{Cor. 6.4}.
\end{proof}

Note that there are growths which occur over $\BQ$ but not over some  quadratic extension:  $C_3$ to $C_3 \oplus C_3$ occurs over $\BQ$ but by \cref{GT} Part 5 not over $\BQ(\sqrt{-3})$.  On the other hand, there are growths which occur over a quadratic field but not over $\BQ$: $C_1$ to $C_3 \oplus C_3$ cannot over $\BQ$ because if it did, $\BQ$ would contain a primitive 3rd root of unity by \cref{GT} Part 3.  On the other hand, $C_1$ to $C_3 \oplus C_3$ occurs over $\BQ(\sqrt{-3})$ 

Also,
$C_1$ to $C_{15}$ cannot occur over $\BQ$ by \cref{oddpartsadd} since $X_1(15)$ has no noncuspidal $\BQ$-rational points.  On the other hand, this growth does occur over $K=\BQ(\sqrt{5})$:  By \cite{NajCubic}*{Thm 2} there is an elliptic curve E/K with $E(K)_{tor} \approx C_{15}$.  Choose d $\in K$, $d \not = 0$, such that $E^d(K)_{tor} = C_1$.  Then $E(K(\sqrt{d}))_{tor} \approx C_{15}$ by \cref{oddpartsadd} and \cref{GT} Part 1.

\section{$K$-Rational Points on $X_0(n)$}
\label{ratpoints}

To study torsion over quadratic fields in the case $j=0, 1728$ we use the technique from \cite{Lem}.

\begin{lem}
\label{count}
Let p be a prime and $E/F_p$ an elliptic curve with model $y^2=x^3+Ax+B$.

\begin{enumerate}

\item If A=0 (i.e. j(E)= 0) and p $\equiv$ 2 mod 3  then $|E(F_p)| = p+1$ and $|E(F_{p^2})| = (p+1)^2$.

\item If B=0 (i.e. j(E)=1728) and p $\equiv$ 3 mod 4  then $|E(F_p)| = p+1$ and $|E(F_{p^2})| = (p+1)^2$.

\end{enumerate}

\end{lem}

\begin{proof}

If $A=0$  and $p \equiv 2$ mod $3$  then $|E(F_p)| = p+1$ by \cite{Washington}*{Prop 4.33}.  If B=0 and $p \equiv 3$ mod $4$  then $|E(F_p)| = p+1$ by \cite{Washington}*{Thm 4.23}.  Now in either case above, $|E(F_{p^2})| = p^2+ 1-(\alpha^2+\beta^2)$ by \cite{Washington}*{Thm 4.12}, where $\alpha$ and $\beta$ are roots of $x^2+p$.  Hence  
\begin{align*}
|E(F_{p^2})| & = p^2+ 1-(\alpha^2+\beta^2)\\
& =  p^2+ 1 - (-p-p)\\
& = p^2+2p+1\\
& = (p+1)^2\\
\end{align*}
\end{proof}

\begin{thm}
\label{j=0}
\label{j=1728}
Let $K$ be a quadratic field and $E/K$ an elliptic curve.  If $j(E)=0$ and $p>3$ is a prime then 
$E(K)_{tor}$ has no element of order $p$.  If $j(E)=1728$ and $p>2$ is a prime then 
$E(K)_{tor}$ has no element of order $p$.

\end{thm}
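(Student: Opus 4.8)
The plan is to argue by reduction modulo a carefully chosen rational prime $\ell$, exploiting that reduction is injective on torsion of order prime to the residue characteristic, and then forcing a contradiction with the point counts of \cref{count}. Throughout, note that the roles of the letter $p$ differ: $p$ is the prime whose $p$-torsion we wish to rule out, while the prime called ``$p$'' in \cref{count} will be instantiated as $\ell$.

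Suppose first $j(E)=0$ and, for contradiction, that $E(K)_{\tors}$ contains a point $P$ of order $p$ with $p>3$. Since $j(E)=0$ I may fix an integral model $y^2=x^3+B$ with $B\in\mathcal{O}_K$. I want a rational prime $\ell$ and a prime $\mathfrak p$ of $K$ above $\ell$ such that (a) $E$ has good reduction at $\mathfrak p$, (b) $\ell\equiv 2\pmod 3$, and (c) $\ell\not\equiv -1\pmod p$. Condition (a) excludes only the finitely many $\ell$ dividing $6B$ (and $\ell=p$), while (b) and (c) are congruences modulo $3$ and $p$. Because $p>3$ we have $\gcd(3,p)=1$, so by the Chinese Remainder Theorem there is a residue class $a\pmod{3p}$ with $a\equiv 2\pmod 3$ and $a\not\equiv -1,0\pmod p$; this class is coprime to $3p$, so by Dirichlet's theorem it contains infinitely many primes, and I pick one, $\ell$, avoiding the finite bad set.

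With $\ell$ fixed, choose $\mathfrak p\mid\ell$ of good reduction; since $K/\Q$ is quadratic the residue field $k_{\mathfrak p}$ is $\F_\ell$ or $\F_{\ell^2}$. The reduced curve has $j=0$ and a model with $A=0$, and $\ell\equiv 2\pmod 3$, so \cref{count}(1) applied with the prime $\ell$ gives $|E(k_{\mathfrak p})|=\ell+1$ or $(\ell+1)^2$; in either case the prime divisors of $|E(k_{\mathfrak p})|$ are exactly those of $\ell+1$. On the other hand $\ell\ne p$, so reduction at $\mathfrak p$ is injective on $p$-torsion (a standard property of good reduction, \cite{Si}), whence $\overline{P}$ has order $p$ and $p\mid|E(k_{\mathfrak p})|$. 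This forces $p\mid\ell+1$, contradicting (c). Hence no such $P$ exists.

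The case $j(E)=1728$ is entirely analogous, using a model $y^2=x^3+Ax$ (so $B=0$), replacing (b) by $\ell\equiv 3\pmod 4$, and invoking \cref{count}(2); here $p>2$ gives $\gcd(4,p)=1$, so the same CRT-plus-Dirichlet step produces the required $\ell$. The one point demanding care---and the main obstacle---is checking that the congruence forcing the good point count can be met \emph{simultaneously} with leaving a residue not congruent to $-1$ modulo $p$. This is exactly where the hypotheses $p>3$ (resp.\ $p>2$) are needed: for $p=3$ one has $2\equiv -1\pmod 3$, so conditions (b) and (c) become incompatible, and for $p=2$ the condition $\ell\equiv 3\pmod 4$ is incompatible with $\ell\not\equiv -1\pmod 2$; in these excluded cases the argument genuinely breaks down.
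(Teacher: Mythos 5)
Your proof is correct and takes essentially the same route as the paper's: choose an auxiliary rational prime $\ell$ via CRT and Dirichlet so that $E$ has good reduction above $\ell$ and $\ell\equiv 2\pmod 3$ (resp.\ $\ell\equiv 3\pmod 4$), apply \cref{count} to get $|E(k_{\mathfrak p})|=\ell+1$ or $(\ell+1)^2$, and conclude by injectivity of reduction on torsion prime to $\ell$. The only cosmetic difference is that you impose $\ell\not\equiv -1,0\pmod p$ while the paper fixes the specific class $\ell\equiv 1\pmod p$ (so that $\ell+1\equiv 2$); both serve the identical purpose.
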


\begin{proof}
Suppose $j(E)=0$.  Twisting by a square in $\mathcal{O}_K$ if necessary, we may assume $E$ has a model of the form  $y^2=x^3+AX+B$ with $A,B \in \mathcal{O}_K$.  Note that since $\mathcal{O}_K$ is a Dedekind domain, the principal ideal (disc($E$)) has only a finite number of prime ideal divisors, and hence disc($E$) lies in only a finite number of prime ideals of $\mathcal{O}_K$.  Let  $q > 3$ be a prime in $\mathbb{Z}$.  Since $q$ $\not = 3$, by the Chinese remainder theorem there exists an integer $n$ satisfying:$$n+1 \equiv 2 \mbox{ mod } q$$ $$ n \equiv 2 \mbox{ mod } 3$$
Furthermore, $n+3qk$ satisfies the congruences above for every integer $k$, and $(n,3q)=1$ by the congruences above.  Hence by Dirichlet's theorem on arithmetic progressions, there are infinitely many primes in this arithmetic progression.  In particular, there is a prime $p$ satisfying the congruences above such that $E$ has good reduction modulo a prime ideal $\beta$ above $p$.  As $[K:\mathbb{Q}]=2$, we have $\mathcal{O}_K/\beta \approx F_p$ or $F_{p^2}$.
By the comments following \cite{Si}*{Prop. 3.1} 
we have an injection of the group $E(K)[\overline{p}]$ into 
$E(F_p)$ or $E(F_p^2)$.  But by \cref{count} we have: $$|E(F_p)| = p+1 \equiv 2 \not \equiv 0 \mbox{ mod } q$$ $$|E(F_{p^2})| = (p+1)^2 \equiv 4 \not \equiv 0 \mbox{ mod } q$$ as  q $\not = 2$.  Hence in either case (noting $p \not = q$), we conclude there is no point of order $q$ in $E(K)_{tor}$.

Now suppose $j=1728$.  If $q$ is an odd prime, then one can argue just as in the $j=0$ case that there is no point of order $q$.  
\end{proof}

\subsection{}

Magma describes the $n$-cycle $C$ corresponding to an $n$-isogeny 
by providing a polynomial $f_C$ whose roots are precisely the $x$-coordinates of the points in $C$.  Given an $n$-isogeny with $n$-cycle $C$, let $K_C$ denote
the field of definition of $C$ (that is, the field obtained by adjoining to $K$ all the coordinates of the points of $C$) and for a polynomial $f$, let $K(f)$ denote the splitting field of $f$ over $K$.  If an $n$-isogeny with $n$-cycle $C$ is pointwise rational over a field $L$ then $f_C$ should split completely over $L$.  In particular if $L$ is a quadratic extension of $K$, then $f_C$ must have irreducible factors of degree at most $2$ over $K$.

We will now argue that no elliptic curve over $K=\mathbb{Q}(\sqrt{-3})$ has a $21$-isogeny pointwise rational over a quadratic extension of $K$.  Magma tells us the modular curve $X_0(21)$ has rank $0$, torsion $C_2 \oplus C_8$ over $K$ and $4$ cusps over $K$.  The $12$ non-cuspidal points correspond to isomorphism classes $\overline{(E,C)}$ and using Magma we found representatives of each class (see \Cref{x0(21)}).  As one can see from the table, for each representative $(E,C)$ with $j \not = 0$, $f_C$ has an irreducible factor of degree at least $3$ and hence 
$[K_C:K] \geq [K(f_C):K] \geq 3$.
In particular, there is no quadratic extension $L/K$ such that   all the points of $C$ are $L$-rational.
Now since in each of the cases just mentioned, $j \not = 0,1728$, by \cite{Si}*{p. 45} 
the isomorphism class of $(E,C)$ just consists of $(E^d,C^d)$ for nonzero $d \in K$, where $C^d$ denotes the image of $C$ under quadratic twist by $d$. If $(x,y) \in C$ then $(dx, d^{3/2}y) \in C^d$.  As $d \in K$, $K(f_C) = K(f_{C^d})$.  Hence the $8$ isomorphism classes with $j \not = 0$ in \Cref{x0(21)} do not contain an example of an elliptic curve $E/K$ with a $21$-isogeny pointwise rational over a quadratic extension of $K$.  In the $j=0,1728$ case, Magma is not yet able to describe the isomorphism class, so we instead argue as follows:  If there is an elliptic curve $E/K$ with a $21$-isogeny pointwise rational over a quadratic extension $L$ of $K$, then by \cref{oddpartsadd} there is an elliptic curve $E/K$ with a point of order 7 over $K$. But this is impossible by \cref{j=0}.

{
\renewcommand{\arraystretch}{1.3}

\begin{table}[ht]
 \caption{($K=\mathbb{Q}(\sqrt{-3})$) Representatives (E,C) of isomorphism classes corresponding to non-cuspidal K-rational points on a model of $X_0(21)$ } 
 
\begin{threeparttable}
 
 \centering 

    \begin{tabular}{ | l | l | l | l |}
    
    \hline
\label{x0(21)}    
    Point & j(E) &  E &  $f_C$ \\ \hline
    
    $(-1/4, 1/8)$ & $3375/2$ & $[20/441, -16/27783 ]$ & $(1,3,3,3)$ \\ \hline
    
   $(2, -1)$  & $-189613868625/128$ & $[-1915/36, -48383/324]$  & $(1,3,6)$ \\ \hline
   
    $(-1, 2)$ &  $-1159088625/2097152$   &   $[-505/192, -23053/6912 ]$ &  $(1,3,6)$  \\ \hline
    
     $(5,13)$  &  $-140625/8$  & $[-1600/147, -134144/9261]$  & $(1,3,3,3)$ \\ \hline

    $(\frac{(\alpha+1)}{2}, \alpha-1)$ &  $-12288000$  &  $[\frac{(40\alpha+10)}{49}, \frac{(-2530\alpha-6831)}{12348}]$ & $(1,3,6)$ \\ \hline
    
     $(\frac{(-\alpha+1)}{2}, -\alpha-1)$ &  $-12288000$  &  $[\frac{(-40\alpha+10)}{49}, \frac{(2530\alpha-6831)}{12348}]$  & $(1,3,6)$ \\ \hline
    
    $(\frac{(\alpha+1)}{2}, \frac{(-3\alpha+1)}{2})$  &  $54000$   & $[\frac{-135\alpha-585}{98}, \frac{-660\alpha-1782}{343} ]$ & $(1,3,6)$  \\ \hline
    $(\frac{(-\alpha+1)}{2}, \frac{(3\alpha+1)}{2})$  &  $54000$   & $[\frac{135\alpha-585}{98}, \frac{660\alpha-1782}{343} ]$ & $(1,3,6)$  \\ \hline

    $(\frac{-3\alpha-5}{2}, 8)$  &  0 & See \cref{j=0}   & \\ \hline
    $(\frac{3\alpha-5}{2}, 8)$  &  0 & See \cref{j=0}   & \\ \hline
    $(\frac{-3\alpha-5}{2}, \frac{3\alpha-11}{2})$  &  0  & See \cref{j=0}   & \\ \hline
    $(\frac{3\alpha-5}{2}, \frac{-3\alpha-11}{2})$  &  0  & See \cref{j=0}     & \\ \hline

    \end{tabular}
    \begin{tablenotes}
            \item[$\dagger$] The elliptic curve $y^2 = x^3+ax+b$ is denoted by $[a,b]$. In the last column we list the degrees of the irreducible factors of $f_C$ over $K$.
            \item[$\ddagger$] We use the model $y^2 + xy = x^3 - 4x - 1$ for $X_0(21)$.
        \end{tablenotes}
     \end{threeparttable}

\end{table}

}

On the other hand, over $K= \mathbb{Q}(i)$, $X_0(21)$ has rank 1. A search of points did not  produce an example of a 21-isogeny pointwise rational over a quadratic extension of $K$.

\subsection{}

{
\renewcommand{\arraystretch}{1.3}

\begin{table}[ht]

 \caption{($K=\mathbb{Q}(\sqrt{-1})$) Representatives (E,C) of isomorphism classes corresponding to non-cuspidal K-rational points  on a model of $X_0(15)$ } 
      \begin{threeparttable}

 \centering 

    \begin{tabular}{ | l | l | l | l |}

    \hline
\label{x0(15)}

    Point & j(E) &  E &  Deg($f_C$) \\ \hline

    $(8, -27)$& $-121945/32$ & $[-87/20, -421/100]$
 &  (1,1,1,2,2) \\ \hline
    
     (-2, -2)& $46969655/32768$ & $[633/54080, 239/1081600]$
 & (1,1,1,2,2) \\ \hline
    
    (-13/4, 9/8)& -25/2 & $[-12/25, -944/625 ]$ &  (1,2,4) \\ \hline
    
   (3,-2)  & -349938025/8 & $[\frac{-46272}{4225}, \frac{-1473536}{105625} ]$ &  (1,2,4) \\ \hline
   
    $(1/2,\frac{-15i - 3}{4})$ &  $\frac{-198261i-62613}{2}$  & 
    $[\frac{6846i + 9528}{105625}, \frac{-22652i + 30164}{2640625}] $&  (1,2,4)  \\ \hline
    
     $(1/2,\frac{15i - 3}{4})$ &  $\frac{198261i-62613}{2}$  & 
    $[\frac{-6846i + 9528}{105625}, \frac{22652i + 30164}{2640625}] $&  (1,2,4)  \\ \hline
    
     $(3i-1, -6i+6)$  & $\frac{15363i - 47709}{256}$  & $[\frac{-3i+96}{200}, \frac{3989i - 373}{10000}]$ &  (1,2,4) \\ \hline 
    $(-3i-1, 6i+6)$  & $\frac{-15363i - 47709}{256}$  & $[\frac{3i+96}{200}, \frac{-3989i - 373}{10000}]$ & (1,2,4) \\ \hline

    $(3i-1, 3i-6)$ &  $\frac{-13670181i+19928133}{8}$ & $ [ \frac{2583*i + 9444}{8450}, \frac{-93373i+39511}{211250} ] $  & (1,2,4) \\ \hline
    $(-3i-1, -3i-6)$ &  $\frac{13670181i+19928133}{8}$ & $ [ \frac{-2583*i + 9444}{8450}, \frac{93373i+39511}{211250} ] $  & (1,2,4) \\ \hline
    
     $(-7, 15i+3)$ &  $\frac{-86643i-1971}{4}$  &  $[\frac{216i-2688}{625}, \frac{8608i-53344}{15625} ]$ & (1,2,4) \\ \hline
     
      $(-7, -15i+3)$ &  $\frac{86643i-1971}{4}$  &  $[\frac{-216i-2688}{625}, \frac{-8608i-53344}{15625} ]$ &  (1,2,4) \\ \hline

\end{tabular}
  \begin{tablenotes}
            \item[$\dagger$]  In the last column we list the degrees of the irreducible factors of $f_C$ over K.
            \item[$\ddagger$] We use the model $y^2+xy+y=x^3+ x^2-10x-10$ for $X_0(15)$.
        \end{tablenotes}
     \end{threeparttable}

\end{table}
}

Now we will study 15-isogenies over $K = \mathbb{Q}(i)$.  
We see the first two entries in \Cref{x0(15)} indicate the only potential isomorphism classes in which we could find a 15-isogeny pointwise rational over a quadratic extension of $K$. 
Hence if a pair $(E,C)$ exists with $E/K$, $C$ Gal($\overline{K}/K$)-invariant and the points of $C$
$L$-rational for some quadratic extension $L/K$
then in fact $E$ is defined over $\mathbb{Q}$ and $C$ is Gal($\overline{\mathbb{Q}}/\mathbb{Q})$-invariant.

The point $(8,-27)$ corresponds to $(E,C)$ with 
$$ f_C =  (x - 7/10)(x+ 1/2)(x+ 17/10)(x^2+x- 139/20)(x^2+13x+ 269/20)$$
A brief computation yields $K(f_C)=\mathbb{Q}(\sqrt{5})$. Since $j \not = 0, 1728$, any pair $(E',C')$ equivalent to $(E,C)$ is of the form $E'=E^d$, $C'=C^d$ for some $d$ in $K$. As $K(f_C) = K(f_{C^d})$, if $K_C/K$ is degree 2 
then we must have $K_C = K(\sqrt{5})$.
The point $(1/2, 3\sqrt{-6}/5  )$ is in $C$, so the only potential $d$-twists (up to a square in $K$) in which $K_{C^d}/K$ is degree 2
(namely $K(\sqrt{5})$) are $d=-6,-6 \cdot 5$.  Magma now tells us that for these two values of $d$, $E^d(K(\sqrt{5}))_{tor} \approx C_{15}$. 

The point $(-2,-2)$ corresponds to $(E,C)$ with $f_C = l(x)q(x)$ where: 
$$l(x) =  (x - 3/104)(x+17/520)(x+113/520)$$
\vspace{-5mm}
$$q(x) = (x^2 - (11/52)x + 2333/54080)(x^2 + (1/52)x + 437/54080)$$
A brief computation yields $K(f_C)=\mathbb{Q}(\sqrt{-15})$, so as above, if $K_C/K$ is degree 2 then we must have $K_C =
K(\sqrt{-15})$.
The point $(3/104, 4\sqrt{26}/845  )$ is in $C$, so the only potential $d$-twists in which $K_{C^d}/K$ is degree 2
are $d=26,26 \cdot (-15)$.  Magma now tells us that for these two values of $d$, $E^d(K(\sqrt{-15}))_{tor} \approx C_{15}$.  Hence there are \textit{exactly} four elliptic curves over $K$ (up to isomorphism over $K$) with a 15-isogeny pointwise rational over a quadratic extension of $K$.

Similarly, when $K = \mathbb{Q}(\sqrt{-3})$ we find the same four elliptic curves are  the only elliptic curves over $K$ with a 15-isogeny pointwise rational over a quadratic extension of $K$.

\subsection{}

{
\renewcommand{\arraystretch}{1.3}

 \begin{table}[ht]
 \caption{($K=\mathbb{Q}(\sqrt{-1})$) Representatives (E,C) of isomorphism classes corresponding to non-cuspidal K-rational points  on a model of $X_0(20)$   } 
  \begin{threeparttable}

\setlength{\tabcolsep}{14pt}

    \begin{tabular}{| l | l | l | l |}
    
    \hline
\label{x0(20)}    
    Point & j(E) &  E & $f_C$ \\ \hline
    
    $(-2i, 0)$& $287496$ & $[\frac{264i + 77}{625}, \frac{616i + 1638}{15625}]$ & $(1,1,2,2,4)$ \\ \hline
    
    $(2i, 0)$  & $287496$ & $[\frac{-264i + 77}{625}, \frac{-616i + 1638}{15625}]$  & $(1,1,2,2,4)$ \\ \hline

   $(2i-2, -2i-4)$  & $287496$ & $[\frac{264i + 77}{625}, \frac{616i + 1638}{15625}]$ & $(1,1,2,2,4)$ \\ \hline

   $(-2i-2, 2i-4)$  & 287496 & $[\frac{-264i + 77}{625}, \frac{-616i + 1638}{15625}]$ & $(1,1,2,2,4)$ \\ \hline
   
   $(2i-2, 2i+4)$  & 1728 & See \cref{j=1728}   &   \\ \hline
   
   $(-2i-2, -2i+4)$  & 1728 & See \cref{j=1728} &    \\ \hline

    \end{tabular} 
     \begin{tablenotes}
            \item[$\dagger$]  In the last column we list the degrees of the irreducible factors of $f_C$ over K.
            \item[$\ddagger$] We use the model $y^2=x^3+x^2+4x +4$  for $X_0(20)$.
        \end{tablenotes}
     \end{threeparttable}

\end{table}
}

 Over $K=\mathbb{Q}(\sqrt{-3})$, $X_0(20)$ has rank 0, torsion $C_6$ and these points are all cusps.  Over $K=\mathbb{Q}(\sqrt{-1})$, $X_0(20)$ has rank 0, torsion $C_2 \oplus C_6$ and 6 cusps.  \Cref{x0(20)} shows that 4 non-cuspidal K-points correspond to 20-isogenies pointwise rational over extensions of $K$ of degree at least 4. If $E/K$ has $j(E)=1728$ and a 20-isogeny pointwise rational over a quadratic extension of $K$ then some quadratic twist of $E$  has a point of order 5 over $K$ by \cref{oddpartsadd}.  But this contradicticts \cref{j=1728}.
Therefore there are no 20-isogenies pointwise rational over quadratic extensions of $K=\mathbb{Q}(\sqrt{-3})$ or $\mathbb{Q}(i)$.

\subsection{}

Over $\mathbb{Q}$, $X_0(24)$ has rank 0, torsion $C_2 \oplus C_4$ and 8 cusps.  The torsion and rank do not grow upon extension to $K = \mathbb{Q}(\sqrt{-3})$ or $\mathbb{Q}(i)$ so there are no elliptic curves over $K$ with a 24-isogeny (pointwise rational over \textit{any} extension of $K$).

\subsection{}

The curve $X_0(27)$ is an elliptic curve with model $y^2 + y = x^3 - 7$.
Over $\mathbb{Q}$, $X_0(27)$ has rank 0, torsion $C_3$ and 2 cusps.  The torsion and rank do not grow upon extension to $K =  \mathbb{Q}(i)$  The one non-cuspidal point (3,-5) corresponds to a pair $(E,C)$ with $j(E) = -12288000$ and the degrees of the irreducible factors of $f_c$ over $\mathbb{Q}(i)$ are (1,3,9).  Because $j(E) \not  = 0,1728$, the isomorphism class of $(E,C)$ just consists of quadratic twists of this pair, and hence will yield the same degrees of irreducible factors. Over $K = \mathbb{Q}(\sqrt{-3})$, $X_0(27)$ has 6 cusps and $X_0(27)(K)=C_3 \oplus C_3$.  As in the case $K=\mathbb{Q}(i)$, the 3 non-cuspidal points do not yield 27-isogenies pointwise rational over a quadratic extension of $K$.  Therefore in either case, there are no elliptic curves over $K$ with a 27-isogeny pointwise rational over a quadratic extension of $K$.

\subsection{}

Let $K=\mathbb{Q}(\sqrt{-3})$ or $\mathbb{Q}(i)$.  If $E/K$ possesses a cyclic Gal($\overline{K}/K$)-invariant subgroup $C$ of order 30, then $C$ has a unique cyclic subgroup of order 15 and hence this subgroup is Gal($\overline{K}/K$)-invariant as well.  So if $K_C/K$ is degree 1 or 2 
then $E$ possesses a 15-isogeny pointwise rational over a quadratic extension of $K$.  
But there are only four such pairs $(E,C')$, and we found that in each case $K_{C'}/K$ was degree 2
so we would have $K_C = K_{C'}$.
But as already noted, the torsion over the extension was $C_{15}$ in each case, so there are no 30-isogenies over $K$ (pointwise rational over a quadratic extension of $K$).

\subsection{}

Magma tells us $X_0(35)$ is genus 3 with affine model $$ y^2 + (-x^4 - x^2 - 1)y = -x^7 - 2x^6 - x^5 - 3x^4 + x^3 - 2x^2 + x $$

Furthermore Magma found an automorphism of $X_0(35)$ such that the quotient curve $E$ is genus 1 with affine model:
$$y^2 + y = x^3 + x^2 + 9x + 1$$
The quotient map (defined between the projective closures) is given by:

$$f:X_0(35) \rightarrow E $$
$$(x,y,z) \mapsto (p^f_1,p^f_2,p^f_3)$$
 
 \noindent $p^f_1 = x^4 - 5x^3z - 8x^2z^2 + 5xz^3 + z^4$\\
$p^f_2 = 3x^4 - x^3z + 4x^2z^2 + xz^3 - 7yz^3 + 3z^4$\\
$p^f_3 = x^4 + 2x^3z - x^2z^2 - 2xz^3 + z^4$\\

Because $f$ is a rational map, the only potential $K$-rational points of $X_0(35)$ are the non-regular points of $f$ and $f^{-1}(E(K))$. In order to compute $f^{-1}(E(K))$ we must first compute $E(K)$.  Magma/Sage give us the following information:

\begin{table}[ht]
\caption{ $K$-Rational Points on a Genus 1 Quotient of $X_0(35)$}
\begin{center}
    \begin{tabular}{| l | l | l | p{5cm} |}
    \hline
    K &  rk($E(K)$) & $E(K)_{tor}$ & Points of $E(K)_{tor}$\\ \hline
    $\mathbb{Q}(\sqrt{-1})$ & 0 & $ C_3$ & [0,1,0], [1,3,1],[1,-4,1] \\ \hline
     $\mathbb{Q}(\sqrt{-3})$ & 0 & $C_3 \oplus C_3$ & 
     $[0,1,0], [1,3,1],[1,-4,1], \newline
     [\frac{1}{2}(5\alpha-1), \frac{1}{2}(-5\alpha+9),1], \newline
     [\frac{1}{2}(-5\alpha-1), \frac{1}{2}(5\alpha+9),1],\newline
     [\frac{1}{2}(5\alpha - 1), \frac{1}{2}(5\alpha-11),1],\newline
     [\frac{1}{2}(-5\alpha-1), \frac{1}{2}(-5\alpha-11),1],\newline
     [-\frac{4}{3}, \frac{1}{18}(35\alpha -9),1 ],\newline
     [-\frac{4}{3}, \frac{1}{18}(-35\alpha -9),1 ]$

    \\ \hline
    \end{tabular}
\end{center}
\end{table}

To compute $f^{-1}([x,y,z])$, we form the ideal $<C, p^f_1-wx, p^f_2-wy,p^f_3-wz>$ 
(C denotes the model of $X_0(35)$ above) and compute its Gr{\"o}bner Basis (with respect to the ordering x,y,z,w).  Often, one can find basis elements that allow the system to be solved by hand.  We can assume $z\not = 0$ as the only point on our model of $X_0(35)$ with this property is [0,1,0] and $f$ is not defined at this point.

\begin{table}[ht]
\caption{ Gr{\"o}bner basis data for determination of $f^{-1}(E(K))$ }
\begin{center}
    \begin{tabular}{| l | l | l |  p{10cm} |}
    \hline
    Point P of E(K) & $f^{-1}(P)$ & Gr{\"o}bner basis elements \\ \hline
    [0,1,0] & $\emptyset$  & $w^2$  \\ \hline
    
    [1,3,1] & [0,0,1]  & $xw^2,yw^2$ \\ \hline
    [1,-4,1] &    [0,1,1]     &  $xw^2, yw^2-zw^2$ \\ \hline

    \end{tabular}
\end{center}

\end{table}

For each of the six extra points over $K = \mathbb{Q}(\sqrt{-3})$ the Gr{\"o}bner basis contains a polynomial $g(w)$. Using Magma one can check that in each case the only root of $g(w)$ over $K$ is 0.  Hence the ($K$-rational) inverse image of these points under $f$ is empty.

Finally, using a Gr{\"o}bner basis for the ideal $<C, p^f_1, p^f_2,p^f_3>$ we can determine the non-regular points of $f$.  If $z \not = 0$ then the Groebner basis contains $y^2-6y+4$.  This has no roots over $K$.  Hence the only $K$-rational points on $X_0(35)$ are $[0,0,1]$, $[0,1,0]$ and $[0,1,1]$.  These points are all cusps 
so there are no 35-isogenies defined over $K = \mathbb{Q}(\sqrt{-1}),\mathbb{Q}(\sqrt{-3})$.

We summarize our findings in the following theorem.

\begin{thm}
\label{isogenies}

If K $=\mathbb{Q}(\sqrt{-3})$ and $E/K$ is an elliptic curve, $E$ has no $N$-isogenies pointwise rational over a quadratic extension of $K$ for $N= 20,21,24,27,30,35,45$.  If $K =\mathbb{Q}(\sqrt{-1})$ and $E/K$ is an elliptic curve, $E$ has no $N$-isogenies pointwise rational over a quadratic extension of $K$ for $N= 20,24,27,30,35,45$.  The curve $X_0(21)$ is genus 1 and rank 1 over $K$.  In either case above, there are exactly four elliptic curves over $K$ (up to isomorphism over $K$) with a 15-isogeny pointwise rational over a quadratic extension of $K$.

\end{thm}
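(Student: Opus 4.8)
The statement collects the case analyses of the preceding subsections, so the plan is to assemble those and additionally to handle the two values $N=30,45$ that are not treated by a dedicated subsection. For $N=20,21,24,27,35$ the uniform method is: fix a model of $X_0(N)$ over $\mathbb{Q}$, compute the rank and torsion of $X_0(N)(K)$ in Magma, separate cusps from non-cuspidal points, and for each non-cuspidal point $\overline{(E,C)}$ read off the degrees of the irreducible factors of $f_C$ over $K$. Since a pointwise rational $N$-isogeny over a quadratic $L/K$ forces $f_C$ to split over $L$, any point whose $f_C$ has an irreducible factor of degree $\geq 3$ can be discarded. Points with $j\in\{0,1728\}$, where Magma does not describe the isomorphism class, are instead dispatched by noting via \cref{oddpartsadd} that a pointwise rational $N$-isogeny over $K(\sqrt{d})$ would put a point of order $p$ (the largest prime dividing $N$) on the twist $E^d/K$, which \cref{j=0} and \cref{j=1728} forbid.

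Carrying this out: for $N=24$ neither rank nor torsion of $X_0(24)$ grows from $\mathbb{Q}$ to $K$, so there is nothing to check; for $N=20$ over $\mathbb{Q}(\sqrt{-3})$ every $K$-point is a cusp, while over $\mathbb{Q}(i)$ the non-cuspidal points either have $f_C$ with a degree-$4$ factor or have $j=1728$ (apply \cref{j=1728}, order $5$); for $N=27$ the non-cuspidal points have $j\neq 0,1728$ and $f_C$ of type $(1,3,9)$; and for $N=21$ over $\mathbb{Q}(\sqrt{-3})$ the twelve non-cuspidal points of \Cref{x0(21)} split as eight carrying a degree-$\geq 3$ factor and four with $j=0$ (apply \cref{j=0}, order $7$). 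Over $\mathbb{Q}(i)$, $X_0(21)$ has rank $1$, so $X_0(21)(K)$ is infinite and cannot be enumerated; this is exactly why $N=21$ is omitted from the $\mathbb{Q}(i)$ list, and the genus-$1$, rank-$1$ assertion is recorded separately in the theorem. For $N=35$, where $X_0(35)$ is genus $3$, I would pass to the genus-$1$ quotient $E$ under a Magma-computed automorphism, determine $E(K)$, and pull back: the $K$-points of $X_0(35)$ are the non-regular points of $f$ together with $f^{-1}(E(K))$, each fibre computed from a Gröbner basis, and all of them turn out to be cusps.

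The values $N=30$ and $N=45$ reduce to the $15$-isogeny classification. A cyclic group of order $30$ (respectively $45$) has a unique subgroup of order $15$; being characteristic it is Gal$(\overline{K}/K)$-invariant and is pointwise rational over any field over which the ambient cycle is. Hence a pointwise rational $30$- or $45$-isogeny over a quadratic $L/K$ yields a pointwise rational $15$-isogeny over $L$, so $\overline{(E,C')}$ is one of exactly four pairs, each realized only over $L=K(\sqrt{5})$ or $K(\sqrt{-15})$ with $E(L)_{tor}\approx C_{15}$. But pointwise rationality of the order-$30$ (resp. order-$45$) cycle would force $C_{30}$ (resp. $C_{45}$) into $E(L)$, contradicting $E(L)_{tor}\approx C_{15}$.

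Finally, the count of four $15$-isogenies comes from \Cref{x0(15)}: over $\mathbb{Q}(i)$ only the two points with $f_C$ of type $(1,1,1,2,2)$ can split over a quadratic field, and for each the splitting field of $f_C$ singles out one quadratic field and two admissible twists, giving four curves, with the same four recurring over $\mathbb{Q}(\sqrt{-3})$. I expect the main obstacle to be the $N=35$ computation: with no rank-$0$ shortcut available on the genus-$3$ curve, one must make the quotient-and-pullback argument effective, verifying that the relevant Gröbner basis polynomials $g(w)$, and $y^2-6y+4$ on the non-regular locus, have no $K$-roots. A secondary, deliberately unresolved point is $N=21$ over $\mathbb{Q}(i)$, where rank $1$ blocks the enumeration entirely.
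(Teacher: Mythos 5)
Your proposal follows essentially the same route as the paper: the same Magma computations of rank, torsion, and cusps of $X_0(N)(K)$ for $N=15,20,21,24,27$, the same use of the factor degrees of $f_C$ together with \cref{j=0} and \cref{j=1728} (via \cref{oddpartsadd}) to dispatch the $j\in\{0,1728\}$ classes, the same genus-$1$ quotient and Gr{\"o}bner-basis pullback for $N=35$, the same treatment of the rank-$1$ obstruction for $N=21$ over $\mathbb{Q}(i)$, and the same reduction of $N=30$ to the four classified $15$-isogenies and their $C_{15}$ torsion over $K(\sqrt{5})$ or $K(\sqrt{-15})$. In fact you go slightly beyond the paper, which lists $N=45$ in the theorem but never explicitly argues it; your reduction of $45$ to the $15$-isogeny classification via the unique (hence Galois-invariant and pointwise rational) order-$15$ subgroup is exactly the missing argument and is correct.
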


\section{Growth of Torsion}

\begin{thm}
\label{oddtwist}
Let K $= \mathbb{Q}(\sqrt{-3}$),  d $\in$ $\mathcal{O}_K$, d a nonsquare, and E/K an elliptic curve.

\begin{enumerate}

\item If $E(K)_{tor} \approx C_7, C_9 \mbox{ or }  C_3 \oplus C_3, $ then  $E^d(K)_{tor} \approx C_1 $

\item If $E(K)_{tor} \approx C_3 $ then  $E^d(K)_{tor} \approx C_1 \mbox{ or } C_5 $

\item If $E(K)_{tor} \approx C_5 $ then  $E^d(K)_{tor} \approx C_1 \mbox{ or } C_3 $

\item If $E(K)_{tor} \approx C_1 $ then  $E^d(K)_{tor} \approx C_1, C_3, C_5, C_7, C_9 \mbox{ or } C_3 \oplus C_3 $

\end{enumerate}
Hence the torsion structures $C_7, C_9$ and $C_3 \oplus C_3$ do not grow in any quadratic extension of K.  The torsion structures $C_3$ and $C_5$ grow in at most 1 extension, and $C_1$ grows in at most 2 extensions.

\end{thm}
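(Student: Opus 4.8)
The plan is to recast the whole statement in terms of the single quadratic twist $E^d$ and then eliminate impossible torsion structures one pair at a time. In every case (1)--(4) the group $E(K)_{tor}$ has odd order, so $E(K)[2]=C_1$; parts (1) and (2) of \cref{GT} then give $E^d(K)[2]=C_1$ and $E(L)[2]=C_1$ for $L=K(\sqrt d)$, so that $E(L)_{tor}$ and $E^d(K)_{tor}$ are again of odd order. \cref{oddpartsadd} now yields the clean splitting $E(L)_{tor}\cong E(K)_{tor}\oplus E^d(K)_{tor}$, and \cref{Najman} restricts any odd-order torsion group over $K=\BQ(\sqrt{-3})$ to $\mathcal S:=\{C_1,C_3,C_5,C_7,C_9,C_3\oplus C_3\}$. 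Applied to $E^d$ this already proves (4); what remains is, for each admissible value of $A:=E(K)_{tor}$, to exclude the values of $B:=E^d(K)_{tor}\in\mathcal S\setminus\{C_1\}$ not listed in (1)--(3).

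For the eliminations I would assemble four mechanisms and read off the table of pairs $(A,B)$. (a) Since $E[3]\cong(\BZ/3)^2$, no $B$ may make $\dim_{\BF_3}E(L)[3]\ge 3$; this kills every $B$ with nontrivial $3$-part once $A=C_3\oplus C_3$, and kills $B=C_3\oplus C_3$ once $A\in\{C_3,C_9\}$. (b) \cref{full} forbids $C_5\oplus C_5$, $C_7\oplus C_7$ and $C_9\oplus C_9$ inside $E(L)$. (c) As $\mu_3\in K$, part (5) of \cref{GT} forbids the passage $E(K)[3]=C_3\rightsquigarrow E(L)[3]=C_3\oplus C_3$, eliminating $B\in\{C_3,C_9\}$ whenever $A\in\{C_3,C_9\}$. (d) Choosing a cyclic odd-order $H\subseteq B$ and applying \cref{twistback} gives a $\Gal(\overline K/K)$-invariant subgroup $J\cong H\oplus E(K)_{tor}$ of $E(L)$; whenever $J$ contains a $\Gal$-invariant cyclic subgroup of order $21,35$ or $45$, this is an $N$-isogeny pointwise rational over the quadratic field $L$, contradicting \cref{isogenies}.

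Running through $\mathcal S$, these mechanisms dispatch almost everything: for $A=C_7$ the values $C_3,C_5,C_3\oplus C_3$ produce a cyclic $C_{21}$, a cyclic $C_{35}$, or a $C_{21}$ inside $C_7\oplus C_3\oplus C_3$, while $C_7\oplus C_7$ falls to (b); for $A=C_9$ the value $C_5$ gives $C_{45}$, and $C_3,C_9,C_3\oplus C_3$ fall to (c),(b),(a); for $A=C_3\oplus C_3$ every nontrivial $B$ either raises the $3$-rank to $3$ or produces a forbidden $C_{21}$ or $C_{15}$; and in (2),(3) the remaining $B$'s are removed by (c) and by \cref{full}. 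The genuinely delicate pair is $\{C_7,C_9\}$, where \cref{twistback} only yields $J\cong C_{63}$, which is \emph{not} on the excluded list of \cref{isogenies}; here I would use that $C_{63}$ contains $C_{21}$, so the forbidden $21$-isogeny persists. The sole surviving exception is $C_{15}$: \cref{isogenies} permits a $15$-isogeny pointwise rational over a quadratic extension only for four curves, which acquire exactly $C_{15}$ there and hence (by \cref{oddpartsadd}, since $C_{15}$ is not a $K$-torsion group) have $E(K)_{tor}\in\{C_3,C_5\}$. This is precisely what leaves $B=C_5$ open in (2) and $B=C_3$ open in (3), and it excludes $B=C_5$ for $A=C_3\oplus C_3$, since there $E(L)_{tor}$ would contain $C_{15}$ (forcing $E$ to be one of the four curves) while having order $45$.

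For the closing assertions I translate growth back into the twist: $E$ grows in $L=K(\sqrt d)$ exactly when $E^d(K)_{tor}\ne C_1$, so (1) says $C_7,C_9,C_3\oplus C_3$ never grow. The counting bounds then follow by feeding the parts just proved back into the twisted curves. If $A=C_3$ grew in two extensions $K(\sqrt{d_1}),K(\sqrt{d_2})$ with $d_1\ne d_2$ modulo squares, then $E^{d_1}(K)_{tor}=E^{d_2}(K)_{tor}=C_5$; but $E^{d_2}=(E^{d_1})^{d_1d_2}$ is a quadratic twist of $E^{d_1}$, and part (3) applied to $E^{d_1}$ forces $E^{d_2}(K)_{tor}\in\{C_1,C_3\}$, a contradiction, so $C_3$ grows at most once, and symmetrically (via part (2)) so does $C_5$. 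For $A=C_1$, if some twist $E^{d_0}$ has torsion in $\{C_7,C_9,C_3\oplus C_3\}$ then part (1) applied to $E^{d_0}$ makes every other twist trivial, giving a single growth; otherwise every nontrivial twist is $C_3$ or $C_5$, each occurring at most once by the preceding argument, for at most two extensions. I expect the main obstacle to be mechanism (d): correctly ruling out the near-miss pair $\{C_7,C_9\}$ through $C_{21}\subseteq C_{63}$, together with the $C_{15}$ exception, which forces one to know not merely that \cref{isogenies} permits four $15$-isogeny curves but exactly what their $K$-rational torsion is.
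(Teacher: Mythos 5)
Your proposal is correct, and its skeleton is the paper's: split $E(L)_{tor}\cong E(K)_{tor}\oplus E^d(K)_{tor}$ via \cref{GT} and \cref{oddpartsadd}, restrict both summands to Najman's odd-order list (\cref{Najman}), then kill the forbidden pairs using \cref{full}, \cref{GT}(5), \cref{twistback} and \cref{isogenies}. Where you genuinely diverge is in the two places the paper falls back on computation, and your substitutes are arguably cleaner. For the pairs $\{C_9,C_5\}$ the paper does not invoke the $N=45$ clause of \cref{isogenies}; instead it notes that the curve with nontrivial $3$-part forces its twist to be one of the four curves with a $15$-isogeny pointwise rational over a quadratic extension, and then runs a further Magma computation (factorization of the $3$-division polynomials) showing that among all quadratic twists of those four curves the nontrivial torsion structures are exactly $C_3$ and $C_5$, each occurring once. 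You instead use the $45$-isogeny exclusion directly, and you recover the four curves' $K$-torsion ($\in\{C_3,C_5\}$) from $E(L)_{tor}\cong C_{15}$ together with \cref{oddpartsadd} and \cref{Najman} — so the division-polynomial computation is never needed. Likewise, the paper's counting bounds lean on that computed ``each occurs exactly once'' statement, whereas you derive them purely formally by applying parts (2)--(3) to the twisted curves (via $E^{d_2}=(E^{d_1})^{d_1d_2}$); this bootstrap is valid and is a nice simplification. What the paper's computation buys in exchange is finer information (the exact torsion of every twist of the four curves), which also exhibits the extensions realizing the bounds; your argument only gives the upper bounds, but that is all the theorem claims.

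Two points in your write-up need one more line each to be airtight. First, in the case $A=C_3\oplus C_3$, $B=C_5$, the assertion ``$E(L)_{tor}$ contains $C_{15}$, forcing $E$ to be one of the four curves'' is not automatic: a point of order $15$ over $L$ does not by itself produce a $15$-isogeny over $K$. You need a $\Gal(\overline K/K)$-invariant cyclic subgroup of order $15$ rational over $L$; here it exists because $E(K)[3]=E[3]$ makes every order-$3$ subgroup Galois-invariant, and the $C_5$ transported from $E^d(K)$ is invariant by the construction in the proof of \cref{twistback}. Second, your $C_{21}\subseteq C_{63}$ fix for the pair $\{C_7,C_9\}$ is correct (the order-$21$ subgroup of an invariant cyclic group is characteristic, hence invariant), but note that \cref{twistback} already lets you take $H$ to be any odd-order subgroup of the twist's torsion, so choosing $H\cong C_3\subseteq C_9$ gives the invariant $C_{21}$ in one step — this is exactly how the paper phrases it (``$E(K)[3]\neq C_1$'').
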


\begin{proof}
Let $d \in K$ be a non-square.  Note that if $E'$ is a quadratic twist of $E$ then $E$ is a quadratic twist of $E'$ (up to isomorphism over $K$).  Also by \cref{GT}, all quadratic twists of a curve with odd order torsion will be odd order. By 
\cref{Najman},
the only odd-order torsion structures occurring over $K$ are $C_1,C_3,C_5,C_7,C_9$ and $C_3 \oplus C_3$.
Now if $E(K)[3] \not = C_1$ and $E^d(K)[3] \not = C_1$ then by \cref{oddpartsadd}, $E(K(\sqrt{d}))[3] = C_3 \oplus C_3$, $C_3 \oplus C_3 \oplus C_3$ or $C_3 \oplus C_3 \oplus C_3 \oplus C_3$ , contradicting \cref{GT} Part 5 or \cite{Si}*{Cor. 6.4} respectively. If $m = 5$ or $7$, $E(K)_{tor} \approx C_m$ and $E^d(K)_{tor} \approx C_m$, then by \cref{twistback}, $C_m \oplus C_m \subseteq$ E(L), contradicting \cref{full}.  If $E(K)[3] \not = C_1$ and $E^d(K) \approx C_7$, then by \cref{twistback} $E^d$ has a 21-isogeny pointwise rational over a quadratic extension of $K$.  But no such isogeny exists by \cref{isogenies}.
If $E(K)[5]  = C_5$ and $E^d(K) \approx C_7$, then by \cref{twistback} $E^d$ has a 35-isogeny pointwise rational over a quadratic extension.  But no such isogeny exists by \cref{isogenies}. If $E(K)[3] \not = C_1$  and $E^d(K) \approx C_5$, then by \cref{twistback} $E^d$ has a 15-isogeny pointwise rational over a quadratic extension of $K$.  There are four elliptic curves (two pairs of quadratic twists) over $K$ (up to isomorphism over $K$) with 
such an isogeny.
For each such curve $E$ (we actually need only check one member of each pair), the factorization of the 3-division polynomial of $E$ indicates that the nontrivial torsion structures occurring among the quadratic twists of E are $C_3$ and $C_5$ and each occurs exactly once.
\end{proof}

\textbf{Acknowledgements}

We are grateful to Filip Najman for his   generous advice and for pointing out several issues with an earlier draft of this paper, as well as thankful to Sheldon Kamienny for his helpful discussions.

\begin{bibdiv}

  \begin{biblist}

  \bib{Ejder}{article}
  {
   author={Ejder, {\"O}zlem},
   title={Torsion groups of elliptic curves over quadratic cyclotomic fields in elementary abelian 2-extensions},
   journal={arXiv:1602.03582v2},
   date={2016},
}

  \bib{Fujita}{article}{
   author={Fujita, Yasutsugu},
   title={Torsion subgroups of elliptic curves in elementary abelian
   2-extensions of $\mathbb{Q}$},
   journal={J. Number Theory},
   volume={114},
   date={2005},
   number={1},
   pages={124--134},
   issn={0022-314X},
   review={\MR{2163908 (2006h:11055)}},
   doi={10.1016/j.jnt.2005.03.005}
}

\bib{Tornero}{article}
{
year={2014},
issn={1578-7303},
journal={Revista de la Real Academia de Ciencias Exactas, Fisicas y Naturales. Serie A. Matematicas},
volume={108},
number={2},
doi={10.1007/s13398-013-0152-4},
title={Torsion of rational elliptic curves over quadratic fields},
url={http://dx.doi.org/10.1007/s13398-013-0152-4},
publisher={Springer Milan},

author={Gonzalez-Jimenez, Enrique},
author={ Tornero, Jose M.},
pages={923-934},
language={English}
}

\bib{Kamienny}{article}
 {
    AUTHOR = {Kamienny, S.},
     TITLE = {Torsion points on elliptic curves and {$q$}-coefficients of
              modular forms},
   JOURNAL = {Invent. Math.},

    VOLUME = {109},
      YEAR = {1992},
    NUMBER = {2},
     PAGES = {221--229},
      ISSN = {0020-9910},

       DOI = {10.1007/BF01232025},
       URL = {http://dx.doi.org/10.1007/BF01232025},
}

\bib{Kamienny&Najman}{article}
 {
    AUTHOR = {Kamienny, Sheldon},
    AUTHOR = { Najman, Filip},
     TITLE = {Torsion groups of elliptic curves over quadratic fields},
   JOURNAL = {Acta Arith.},

    VOLUME = {152},
      YEAR = {2012},
    NUMBER = {3},
     PAGES = {291--305},
      ISSN = {0065-1036},

       DOI = {10.4064/aa152-3-5},
       URL = {http://dx.doi.org/10.4064/aa152-3-5},
}

\bib{Kenku&Momose}{article}
 {
    AUTHOR = {Kenku, M. A. },
    AUTHOR = {Momose, F.},
     TITLE = {Torsion points on elliptic curves defined over quadratic
              fields},
   JOURNAL = {Nagoya Math. J.},
 
    VOLUME = {109},
      YEAR = {1988},
     PAGES = {125--149},
      ISSN = {0027-7630},

       URL = {http://projecteuclid.org/euclid.nmj/1118780896},
}

\bib{Knapp}{book}
{
   author={Knapp, Anthony W.},
   title={Elliptic curves},
   series={Mathematical Notes},
   volume={40},
   publisher={Princeton University Press, Princeton, NJ},
   date={1992},
   pages={xvi+427},
   isbn={0-691-08559-5},
   review={\MR{1193029 (93j:11032)}},
}

\bib{Kubert76}{article}
{
    AUTHOR = {Kubert, Daniel Sion},
     TITLE = {Universal bounds on the torsion of elliptic curves},
   JOURNAL = {Proc. London Math. Soc. (3)},
 
    VOLUME = {33},
      YEAR = {1976},
    NUMBER = {2},
     PAGES = {193--237},
      ISSN = {0024-6115},

}

\bib{Kwon}{article}
{
    AUTHOR = {Kwon, Soonhak},
     TITLE = {Torsion subgroups of elliptic curves over quadratic
              extensions},
   JOURNAL = {J. Number Theory},

    VOLUME = {62},
      YEAR = {1997},
    NUMBER = {1},
     PAGES = {144--162},
      ISSN = {0022-314X},

       DOI = {10.1006/jnth.1997.2036},
       URL = {http://dx.doi.org/10.1006/jnth.1997.2036},
}

\bib{Kubert79}{article}
{
    AUTHOR = {Kubert, Daniel Sion},
     TITLE = {Universal bounds on the torsion of elliptic curves},
   JOURNAL = {Compositio Math.},

    VOLUME = {38},
      YEAR = {1979},
    NUMBER = {1},
     PAGES = {121--128},
      ISSN = {0010-437X},

       URL = {http://www.numdam.org/item?id=CM_1979__38_1_121_0},
}

\bib{Laska&Lorenz}{article}
{
    AUTHOR = {Laska, Michael},
    AUTHOR = {Lorenz, Martin},
     TITLE = {Rational points on elliptic curves over {${\bf Q}$} in
              elementary abelian {$2$}-extensions of {${\bf Q}$}},
   JOURNAL = {J. Reine Angew. Math.},

    VOLUME = {355},
      YEAR = {1985},
     PAGES = {163--172},
      ISSN = {0075-4102},

       DOI = {10.1515/crll.1985.355.163},
       URL = {http://dx.doi.org/10.1515/crll.1985.355.163},
}

\bib{Lem}{article}
 {
    AUTHOR = {Lemmermeyer, F.},
     TITLE = {Lecture 16, Monday 05.04.04},
   JOURNAL = {[Available online at http://www.fen.bilkent.edu.tr/~franz/ta/ta16.pdf] },
    VOLUME = {},
      YEAR = {cited 2015},
    NUMBER = {},

       URL = {http://dx.doi.org/10.1007/BF01390348},
}

\bib{Mazur}{article}
 {
    AUTHOR = {Mazur, B.},
     TITLE = {Rational isogenies of prime degree (with an appendix by {D}.
              {G}oldfeld)},
   JOURNAL = {Invent. Math.},

    VOLUME = {44},
      YEAR = {1978},
    NUMBER = {2},
     PAGES = {129--162},
      ISSN = {0020-9910},

       DOI = {10.1007/BF01390348},
       URL = {http://dx.doi.org/10.1007/BF01390348},
}

\bib{NajCyc2}{article}
 {
    AUTHOR = {Najman, Filip},
     TITLE = {Complete classification of torsion of elliptic curves over
              quadratic cyclotomic fields},
   JOURNAL = {J. Number Theory},
 
    VOLUME = {130},
      YEAR = {2010},
    NUMBER = {9},
     PAGES = {1964--1968},
      ISSN = {0022-314X},

       DOI = {10.1016/j.jnt.2009.12.008},
       URL = {http://dx.doi.org/10.1016/j.jnt.2009.12.008},
}

\bib{NajCyc}{article}
 {
    AUTHOR = {Najman, Filip},
     TITLE = {Torsion of elliptic curves over quadratic cyclotomic fields},
   JOURNAL = {Math. J. Okayama Univ.},

    VOLUME = {53},
      YEAR = {2011},
     PAGES = {75--82},
      ISSN = {0030-1566},

}

\bib{NajCubic}{article}
{
    AUTHOR = {Najman, Filip},
     TITLE = {Torsion of Rational Ellitpic Curves over Cubic Fields and Sporadic points on $X_1(n)$},
   JOURNAL = {	arXiv:1211.2188 (2012) },
 
    VOLUME = {},
      YEAR = {},
    NUMBER = {},
     PAGES = {},
      ISSN = {},

}

\bib{}{article}
{
    AUTHOR = {Najman, Filip},
     TITLE = {Torsion of elliptic curves over cubic fields},
   JOURNAL = {J. Number Theory},
 
    VOLUME = {132},
      YEAR = {2012},
    NUMBER = {1},
     PAGES = {26--36},
      ISSN = {0022-314X},

       DOI = {10.1016/j.jnt.2011.06.013},
       URL = {http://dx.doi.org/10.1016/j.jnt.2011.06.013},
}

\bib{NajLarge}{article}
{
year={2014},
issn={1578-7303},
journal={Revista de la Real Academia de Ciencias Exactas, Fisicas y Naturales. Serie A. Matematicas},
doi={10.1007/s13398-014-0199-x},
title={The number of twists with large torsion of an elliptic curve},
url={http://dx.doi.org/10.1007/s13398-014-0199-x},
publisher={Springer Milan},

author={Najman, Filip},
pages={1-13},
language={English}
}

  \bib{Newman}{article}
  {
   author={Newman, Burton},
   title={Growth of Torsion of Elliptic Curves with Full 2-Torsion over Quadratic Cyclotomic Fields},
   journal={arXiv:1602.08934},
   date={2016},
}

  \bib{Si2}{book}
  {
    AUTHOR = {Silverman, Joseph H.},
     TITLE = {Advanced topics in the arithmetic of elliptic curves},
    SERIES = {Graduate Texts in Mathematics},
    VOLUME = {151},
 PUBLISHER = {Springer-Verlag, New York},
      YEAR = {1994},
     PAGES = {xiv+525},
      ISBN = {0-387-94328-5},

       DOI = {10.1007/978-1-4612-0851-8},
       URL = {http://dx.doi.org/10.1007/978-1-4612-0851-8},
}

\bib{Si}{book}
 {
    AUTHOR = {Silverman, Joseph H.},
     TITLE = {The arithmetic of elliptic curves},
    SERIES = {Graduate Texts in Mathematics},
    VOLUME = {106},
   EDITION = {Second},
 PUBLISHER = {Springer, Dordrecht},
      YEAR = {2009},
     PAGES = {xx+513},
      ISBN = {978-0-387-09493-9},

       DOI = {10.1007/978-0-387-09494-6},
       URL = {http://dx.doi.org/10.1007/978-0-387-09494-6}

}

\bib{Washington}{book}{
   author={Washington, Lawrence C.},
   title={Elliptic curves},
   series={Discrete Mathematics and its Applications (Boca Raton)},
   edition={2},
   note={Number theory and cryptography},
   publisher={Chapman \& Hall/CRC, Boca Raton, FL},
   date={2008},
   pages={xviii+513},
   isbn={978-1-4200-7146-7},
   isbn={1-4200-7146-7},
   review={\MR{2404461 (2009b:11101)}},
   doi={10.1201/9781420071474},
}

\end{biblist}
\end{bibdiv}

\end{document}